\newcommand*\patchAmsMathEnvironmentForLineno[1]{%
 \expandafter\let\csname old#1\expandafter\endcsname\csname #1\endcsname
 \expandafter\let\csname oldend#1\expandafter\endcsname\csname end#1\endcsname
 \renewenvironment{#1}%
    {\linenomath\csname old#1\endcsname}%
    {\csname oldend#1\endcsname\endlinenomath}}%
\newcommand*\patchBothAmsMathEnvironmentsForLineno[1]{%
 \patchAmsMathEnvironmentForLineno{#1}%
 \patchAmsMathEnvironmentForLineno{#1*}}%
\definecolor{brightmaroon}{rgb}{0.76, 0.13, 0.28}
\definecolor{linkblue}{rgb}{0, 0.337, 0.227}
\newcommand{\defin}[1]{\emph{\textcolor{brightmaroon}{#1}}}
\def\mathcolor#1#{\@mathcolor{#1}}
\def\@mathcolor#1#2#3{%
  \protect\leavevmode
  \begingroup
    \color#1{#2}#3%
  \endgroup
}
\newcommand{\mathdefin}[1]{\mathcolor{brightmaroon}{#1}}
\DeclareMathOperator{\pw}{pw}
\DeclareMathOperator{\ply}{ply}
\DeclareMathOperator{\bn}{bn}
\newcommand{\CC}{\mathcal{C}}
\newcommand{\sop}[2]{#1\langle#2\rangle}
\newcommand{\ssop}[2]{#1\lbrace#2\rbrace}
\newcommand{\sm}{\smallsetminus}
\title{\MakeUppercase{Basis Number and Pathwidth}}
\author{
 Babak Miraftab%
  \thanks{School of Computer Science, Carleton University.},\,
 Pat Morin\footnotemark[1] ,\, and
 Yelena Yuditsky%
 \thanks{Computer Science Department, Université libre de Bruxelles and School of Computer Science, University of Leeds.}
}
\begin{document}
\maketitle

\begin{abstract}
  We prove two results relating the basis number of a graph $G$ to path decompositions of $G$.  Our first result shows that the basis number of a graph is at most four times its pathwidth. Our second result shows that, if a graph $G$ has a path decomposition with adhesions of size at most $k$ in which the graph induced by each bag has basis number at most $b$, then $G$ has basis number at most $b+O(k\log^2 k)$.  The first result, combined with recent work of {\NoHyper\citeauthor{geniet.giocanti:basis}} shows that the basis number of a graph is bounded by a polynomial function of its treewidth. The second result (also combined with the work of {\NoHyper\citeauthor{geniet.giocanti:basis}}) shows that every $K_t$-minor-free graph has a basis number bounded by a polynomial function of $t$.
\end{abstract}

\section{Introduction}
For two graphs\footnote{Any graph $G$ that we consider is finite, simple, and undirected with vertex set $V(G)$ and edge set $E(G)$.} $G_1$ and $G_2$, the \defin{symmetric difference} $G_1\oplus G_2$ denotes the graph with vertex set $V(G_1\oplus G_2)=V(G_1)\cup V(G_2)$ that contains each edge $e\in E(G_1)\cup E(G_2)$ if and only if $e$ appears in exactly one of $G_1$ or $G_2$.
Let $G$ be a graph and let $\mathcal{B}$ and $\mathcal{C}$ be sets of subgraphs of $G$. We say that $\mathcal{B}$ \defin{generates} $\mathcal{C}$ if, for each $C\in\mathcal{C}$, there exists $\mathcal{B}'\subseteq\mathcal{B}$ such that $C=\bigoplus_{B\in\mathcal{B}'} B$.
In this case, we say that $\mathcal B$ is a \defin{generating set} of $\CC$.
The \defin{cycle space} of $G$ denoted by $\mathcal C(G)$ is the set generated by all cycles of $G$.
A graph $H$ is \defin{Eulerian} if each vertex of $H$ has even degree. Veblen's Theorem \cite{veblen:application} states that the edges of an Eulerian graph $H$ can be covered with pairwise edge-disjoint cycles. From this, the cycle space $\CC(G)$ of $G$ consists of all Eulerian subgraphs of $G$.
A \defin{basis} of the cycle space consists of a minimum set of Eulerian subgraphs from which we can generate $\CC(G)$.

Let $G$ be a graph and let $\mathcal{B}$ be a generating set for $\CC(G)$. 
For an edge $e$ of $G$, the \defin{ply}\footnote{Various works have different names for what we call \emph{ply}, including \defin{charge}, \defin{congestion}, \defin{participation}, and \defin{load}.  What we call a $k$-basis is also referred to as a \defin{$k$-fold cycle basis}.} of $e$ in $\mathcal{B}$, $\mathdefin{\ply(e,\mathcal{B})}:=|\{B\in\mathcal{B}:e\in E(B)\}|$ is the number of elements in $\mathcal{B}$ that include $e$.  
The \defin{ply} of $\mathcal{B}$, $\mathdefin{\ply(\mathcal{B})}:=\max\{\ply(e,\mathcal{B}):e\in E(G)\}$, is the maximum ply in $\mathcal{B}$ of any edge of $G$.  
Let $\mathcal{B}$ be a generating set (or basis) for $\CC(G)$.
Then we say that $\mathcal B$ is a \defin{$k$-generating set (k-basis)} if $\ply(e,\mathcal{B})\le k$ for every $e\in E(G)$. 
The \defin{basis number} of $G$, $\mathdefin{\bn(G)}:=\min\{\ply(\mathcal{B}):\text{$\mathcal{B}$ is a cycle basis for $G$}\}$, is the minimum ply of any cycle basis of $G$.

It is a well-known fact in linear algebra that every generating set of a vector space contains a basis.  For this reason, in this paper, we focus on $k$-generating sets and not $k$-bases.

The basis number of a graph is a measure of its complexity.  A graph $G$ is a forest\footnote{A \defin{forest} is a graph with no cycles.} if and only if $\bn(G)=0$. A graph $G$ is a cactus graph\footnote{A \defin{cactus graph} is a graph whose cycles are pairwise edge-disjoint.} if and only if $\bn(G)\le 1$.  The most famous result of this type is \defin{Mac~Lane's Planarity Criterion} \cite{MacLane1937} which states that a graph $G$ is planar if and only if $\bn(G)\le 2$. 

Results for graphs with basis number larger than $2$ are generally less sharp than in the planar case. 
In this direction, \citet{lehner.miraftab:sparse} investigate graphs that admit embeddings on surfaces. 
They prove that every non-planar graph embeddable on a surface of Euler characteristic $0$ (that is, the torus or the Klein bottle) has basis number $3$. 
More generally, if a graph $G$ embeds on a surface of genus $g\ge 2$, then $\bn(G)\in O(\log^2 g)$ \cite{lehner.miraftab:sparse}. 
Complete graphs satisfy $\bn(K_n)\le 3$ (see \cite{MR615307}), whereas $1$-planar graphs\footnote{A graph is \defin{$1$-planar} if it can be embedded in $\R^2$ so that each edge is crossed at most once.} (see \cite{biedl}) have unbounded basis number.
Motivated by applications to error correcting codes in quantum computing, \citet{freedman.hastings:building} establish the following unconditional result:

\begin{thm}[\citet{freedman.hastings:building}]\label{general_upper_bound}
  For every $n$-vertex graph $G$, $\bn(G)\in O(\log^2 n)$.
\end{thm}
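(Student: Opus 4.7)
The plan is to prove Theorem~\ref{general_upper_bound} by strong induction on $n$, using a recursive decomposition along a spanning tree. Assume $G$ is connected (otherwise handle each component independently; the basis number of $G$ is the maximum of the basis numbers of its components). The base case of small $n$ is handled by the trivial bound $\bn(G) \le |E(G)|$.

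For the inductive step, fix any spanning tree $T$ of $G$. A standard centroid argument on $T$ yields an edge $e^\star \in E(T)$ whose removal splits $T$ into subtrees $T_1, T_2$ with $|V(T_i)| \le 2n/3$. Let $V_i := V(T_i)$, $G_i := G[V_i]$, and let $F \subseteq E(G)$ denote the edges with one endpoint in each of $V_1, V_2$; note $e^\star \in F$. The inductive hypothesis gives a cycle basis $\mathcal{B}_i$ of $\CC(G_i)$ with ply at most $c\log^2(2n/3)$ for some absolute constant $c$.

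These bases generate the subspace of $\CC(G)$ consisting of Eulerian subgraphs that use no edge of $F$, whose codimension is $|F|-1$ (as seen by contracting $V_1$ and $V_2$ to single vertices). The naive extension---forming, for each $f \in F \setminus \{e^\star\}$, the cycle $C_f$ consisting of $f$, $e^\star$, and the $T$-paths connecting their endpoints---yields a valid basis but loads $e^\star$ with ply $|F|-1$, which can be $\Theta(n^2)$. Instead, I would organize the $|F|-1$ new cycles using a balanced binary tree $\mathcal{T}$ of depth $O(\log n)$ whose leaves are the edges of $F$. At each internal node, include one ``merging'' cycle that glues together the two subtrees via representative $F$-edges from each side, together with the corresponding $T$-paths in $V_1$ and $V_2$. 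Standard rank-counting shows these cycles together with $\mathcal{B}_1 \cup \mathcal{B}_2$ span $\CC(G)$.

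The main obstacle is choosing $\mathcal{T}$ so that every tree edge $t \in E(T)$ is used by $O(1)$ merging cycles per level of $\mathcal{T}$. If we order the $F$-edges by a DFS traversal of $T_1$, then for each $t \in E(T_1)$ the $F$-edges on either side of $t$ form a contiguous block, and a balanced binary pairing of a contiguous block crosses its boundary at most twice per level; but we need a \emph{single} ordering that balances both $T_1$ and $T_2$ simultaneously. I would attempt this by a recursive halving of $F$ that is simultaneously balanced over both trees---plausibly obtainable via a two-dimensional partitioning argument on the endpoint pairs $(u,v) \in V_1 \times V_2$ associated to $F$, or via a pigeonhole search for a split that is nearly balanced on each side. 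Assuming this succeeds, the crossing cycles contribute ply $O(\log n)$ to every edge, yielding the recurrence $P(n) \le P(2n/3) + O(\log n)$, which resolves to $P(n) \in O(\log^2 n)$, as required.
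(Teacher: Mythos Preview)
The paper does not give its own proof of Theorem~\ref{general_upper_bound}; the result is quoted from \citet{freedman.hastings:building} and used as a black box (notably in Lemma~\ref{two_trees}). The only description of the original argument appears in the Conclusions: it is a probabilistic procedure that iteratively adds a shortest cycle $C$ to the generating set and then deletes a uniformly random edge of $C$ from the graph. This is entirely unlike your deterministic centroid-decomposition approach.

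On the merits of your proposal: the recursion $P(n)\le P(2n/3)+O(\log n)$ has the right shape, and the reduction to handling the set $F$ of crossing edges is sound. But the step you yourself label ``the main obstacle'' is a genuine gap rather than a routine detail. You need a balanced binary tree $\mathcal{T}$ on $F$ such that every edge of $T_1$ \emph{and} every edge of $T_2$ is used by only $O(1)$ merging cycles per level of $\mathcal{T}$. Ordering $F$ by a DFS of $T_1$ gives contiguity on the $T_1$ side but imposes no structure whatsoever on the $T_2$-endpoints, and the hand-waved ``two-dimensional partitioning'' or ``pigeonhole search'' is not an argument: the endpoint pairs of $F$ form an arbitrary bipartite relation between $V(T_1)$ and $V(T_2)$, and there is no reason a single hierarchical partition respecting both tree metrics should exist. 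The phrase ``Assuming this succeeds'' is carrying the entire weight of the proof. The Freedman--Hastings probabilistic argument sidesteps exactly this difficulty by never committing to a fixed decomposition; the random edge deletion is what distributes the load.
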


In the current paper, we relate the basis number of a graph to its pathwidth, a graph parameter that is essentially independent of embeddability on surfaces.\footnote{There are graphs of pathwidth $4$ (containing many pairwise vertex-disjoint copies of $K_5$) that cannot be embedded on any surface of bounded genus.  There are planar graphs (grids) that have unbounded pathwidth.}  A \defin{path decomposition} of a graph $G$ is a sequence $B_0,\ldots,B_n$ of subsets of $V(G)$ such that
\begin{enumerate*}[label=(\roman*)]
  \item for each vertex $v$ of $G$, and each $0\le i < k\le n$, $v\in B_i\cap B_k$ implies that $v\in B_j$ for each $i\le j\le k$; and
  \item for each edge $vw$ of $G$, there exists some $i\in\{0,\ldots,n\}$ such that $\{v,w\}\subseteq B_i$.
\end{enumerate*}
The \defin{width} of a path decomposition $B_0,\ldots,B_n$ is $\max\{|B_i|:i\in\{0,\ldots,n\}\}-1$.
The \defin{pathwidth}, \defin{$\pw(G)$} of a graph $G$ is the minimum width of a path decomposition of $G$.  Our main result is the following relationship between pathwidth and basis number, which we prove in \cref{proof_of_one}:

\begin{thm}\label{main}
  For any graph $G$, $\bn(G)\le 4\pw(G)$.
\end{thm}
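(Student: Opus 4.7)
I would prove $\bn(G)\le 4\pw(G)$ by constructing, given a nice path decomposition $B_0,\ldots,B_n$ of $G$ of width $w$, an explicit generating set $\mathcal{B}$ of $\mathcal{C}(G)$ of ply at most $4w$. It suffices to handle the case where $G$ is connected, since the basis number of a graph is the maximum of the basis numbers of its connected components. Write $a_v$ for the index of the first bag containing $v$, and $G_i$ for the subgraph of $G$ induced by vertices with $a_v\le i$.

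The construction proceeds incrementally along the decomposition. Together with the partial generating set $\mathcal{B}_i$ for $\mathcal{C}(G_i)$ I would maintain a spanning tree $T_i$ of $G_i$ and, for each pair $u,w$ of vertices currently present in the bag $B_i$, a canonical path $\pi_i(u,w)\subseteq T_i$. At an introduce bag $B_{i+1}=B_i\cup\{v\}$, with $v$'s neighbours in $B_i$ being $u_1,\ldots,u_k$, declare $vu_1$ to be a new tree edge and add to $\mathcal{B}$ the cycles $C_j := vu_j \cup \pi_i(u_j,u_1)\cup u_1 v$ for $j=2,\ldots,k$. Each $C_j$ introduces the unique new non-tree edge $vu_j$, so together with $\mathcal{B}_i$ the resulting set remains a generating set of $\mathcal{C}(G_{i+1})$. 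Forget bags add no cycles, but canonical paths that pass through the forgotten vertex are discarded and replaced by paths in the trimmed tree.

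The ply analysis is the technical heart of the proof. A newly created non-tree spoke $vu_j$ accrues a single unit of ply (from $C_j$ itself), and the newly created tree edge $vu_1$ accrues ply at most $k-1\le w$ at its birth. A preexisting edge $f\in G_i$ receives additional ply only through its appearances in canonical paths used in cycles born at later introduce bags. The goal is to update the canonical paths locally enough that every edge of $G$ is used only within an $O(1)$-sized window of bags near its introduction, giving a ply contribution bounded by $w$ in each of four distinct roles: as a spoke $vu_j$, as a central spoke $vu_1$, as a canonical-path edge used at an introduce bag, and as one re-routed at a forget bag. Summing these four categories yields the bound $\ply(\mathcal{B})\le 4w$.

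\paragraph{Main obstacle.} The delicate part is the design of the canonical-path update rule and the accompanying amortized ply argument. Using the unique tree path in $T_i$ in a naive way lets a single tree edge sit on canonical paths for an unbounded number of bags and pushes the ply to $\Omega(n)$. The right rule should rotate canonical paths through the newly added tree edge at each introduce step and through the surviving tree structure at each forget step, in such a way that each edge is ``active'' on canonical paths only in a constant-size window around its introduction. Verifying such a rule and extracting the explicit constant~$4$ is where I expect almost all of the work to lie.
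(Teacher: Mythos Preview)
Your incremental framework is in the right spirit, and the paper's proof is also an induction along a normal path decomposition that maintains a spanning forest together with a generating set. But what you have written is not yet a proof: you explicitly defer the entire content to an unspecified ``canonical-path update rule'' and an unproved claim that each edge is active on canonical paths only in an $O(1)$-sized window of bags. Nothing in your setup forces this. If you always choose $vu_1$ as the new tree edge and keep the unique tree paths as canonical paths, a single tree edge can lie on $\Theta(w)$ canonical paths simultaneously and remain on them for $\Theta(n)$ introduce steps; your four-category accounting then gives $\Theta(nw)$, not $4w$. The ``rotate canonical paths at each step'' idea is a hope, not a mechanism, and it is far from clear that any such local rule achieves the constant $4$.

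The paper resolves exactly this obstacle, but with two ideas that are absent from your proposal. First, instead of the particular spoke cycles $C_j$, it takes at each step a $2$-basis $\Delta$ of the planar graph $F^+:=F^-\cup\{vw:w\in N_G(v)\}$ (using MacLane's criterion), so every edge gains at most $2$ units of ply per step rather than up to $w$. Second, and crucially, the new spanning forest $F$ is obtained from $F^+$ by repeatedly deleting a cycle edge of \emph{maximum current ply}. This greedy deletion is what prevents ply from piling up on any single tree edge. The invariant that replaces your ``constant-size window'' claim is a potential-function statement on the reduced tree $\ssop{F}{B_n}$: for every threshold $c\in\{0,\ldots,2t-1\}$, the number of edges $e$ of $\ssop{F}{B_n}$ with $\ply_F(e,\mathcal B)\ge 2c+1$ is at most $2t-c-1$. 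The inductive step shows that adding the $2$-basis $\Delta$ can raise the ply on each relevant edge by at most $2$, while the greedy deletion produces at least one edge of $\ssop{F^-}{B_{n-1}}$ with no counterpart in $\ssop{F}{B_n}$, so the count drops by one at each threshold. Taking $c=2t-1$ yields $\ply(\mathcal B)\le 4t$. If you want to salvage your approach, the missing ingredient is precisely a rule for which tree edge to discard at each step and a quantitative invariant of this kind; the paper's choice (max-ply deletion, ply-level counting on $\ssop{F}{B_n}$) is one clean way to do it.
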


In parallel with the current work, \citet{geniet.giocanti:basis} study the basis number of $K_t$-minor-free graphs and establish that there exist a function $f:\N\to\N$ such that every $K_t$-minor-free graph $G$ has $\bn(G)\le f(t)$. The function $f$ is unspecified, but is at least doubly-exponential.

The Graph Minor Structure Theorem \cite{robertson.seymour:gmxvi} characterizes $K_t$-minor-free graphs in terms of tree decompositions.\footnote{A \defin{tree decomposition}  of a graph $G$ is a collection $(B_x:x\in V(T))$ of subsets of $V(G)$ indexed by the elements of a tree $T$ such that \begin{enumerate*}[label=(\roman*)]
  \item for each vertex $v$ of $G$, $T[x\in V(T):v\in B_x]$ is non-empty connected subtree of $T$; and
  \item for each edge $vw$ of $G$, there exists some $x\in V(T)$ such that $\{v,w\}\subseteq B_x$.
\end{enumerate*}
The \defin{width} of a tree decomposition $(B_x:x\in V(T))$ is $\max\{|B_x|:x\in V(T)\}-1$. The \defin{adhesions} of a tree decomposition $(B_x:x\in V(T))$ are the sets in $\{B_x\cap B_y:xy\in E(T)$.} A natural starting point, therefore, is to study the basis number of graphs having bounded treewidth.  \citet{geniet.giocanti:basis} do exactly this and show that every graph $G$ of treewidth at most $k$ has $\bn(G)\le k^{2^{O(k^2)}}$. Their proof reduces the bounded treewidth case to the bounded pathwidth case and then establishes a version of \cref{main} with a much larger dependence on $\pw(G)$.  
Using \Cref{main} instead establishes that every graph $G$ of treewidth $k$ has $\bn(G)\in O(k^5)$.

The tree decompositions that appear in the Graph Minor Structure Theorem do not have bounded width, but they do have adhesions of bounded size.  
Thus, {\NoHyper\citeauthor{geniet.giocanti:basis}}'s result on $K_t$-minor-free graphs relies on a generalization of \cref{main} to path decompositions with adhesions of bounded size. This motivates our second result, which we prove in \cref{proof_of_two}.\footnote{For a graph $G$ and a subset $X$ of vertices of $G$, the notation \defin{$G[X]$} denotes the induced subgraph of $G$ with vertex set $V(G[X]):=X$ and edge set $E(G[X]):=\{vw\in E(G):\{v,w\}\subseteq X\}$.  For a graph $G$ and a set $S$ of edges of $G$, the notation \defin{$G-S$} refers to the graph with vertex set $V(G-S):=V(G)$ and edge set $E(G-S):=E(G)\sm S$.}

\begin{thm}\label{adhesions}
  Let $k\ge 2$, let $G$ be a graph, let $B_0,\ldots,B_n$ be a path decomposition of $G$, let $H_0:=G[B_0]$, and, for each $i\in\{1,\ldots,n\}$, let $H_i=G[B_i]-E(G[B_{i-1}\cap B_i])$.  If
  \begin{enumerate}[nosep,nolistsep,label=(\alph*)]
    \item\label{small_adhesions} $|B_{i-1}\cap B_i| \le k$ for each $i\in\{1,\ldots,n\}$, and
    \item\label{small_bases} $\bn(H_i)\le b$ for each $i\in\{0,\ldots,n\}$,
  \end{enumerate}
  then $\bn(G)\le b+ O(k\log^2 k)$.
\end{thm}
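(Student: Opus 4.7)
The plan is to construct a cycle basis of $G$ as the union of a \emph{local} family---one basis per $H_i$---and a \emph{cross-adhesion} family handling cycles that traverse more than one piece. For the local family, invoke hypothesis~\ref{small_bases}: for each $i \in \{0,\dots,n\}$, choose a cycle basis $\mathcal B_i$ of $\mathcal C(H_i)$ with $\ply(\mathcal B_i) \le b$. The definition of $H_i$ ensures that $E(H_0),\dots,E(H_n)$ partition $E(G)$, since the adhesion edges $E(G[B_{i-1}\cap B_i])$ are kept in $H_{i-1}$ and removed from $H_i$. Therefore $\mathcal B^*:=\bigcup_i \mathcal B_i$ has ply at most $b$ in $G$, contributing the $b$ term of the bound.

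The span of $\mathcal B^*$ is $\bigoplus_i \mathcal C(H_i)$, and the quotient $\mathcal C(G)/\bigoplus_i \mathcal C(H_i)$ is detected by the ``adhesion boundary'' of each cycle $C$, namely the parities with which $C$ meets the vertices of each $S_i := B_{i-1}\cap B_i$ from the $G[B_0\cup\cdots\cup B_{i-1}]$ side. To generate this quotient I would build an auxiliary multigraph $A$ with vertex set $\bigsqcup_i S_i$, whose edges encode the edges of $G[S_i]$ together with \emph{virtual} edges joining pairs in $S_i\cup S_{i+1}$ lying in the same connected component of $H_i$, each decorated with a chosen connecting path in $H_i$. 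A cycle basis of $A$ lifts, via these path decorations, to a family $\mathcal D$ of cycles of $G$, and by a Mayer--Vietoris style computation $\mathcal B^*\cup\mathcal D$ generates $\mathcal C(G)$. To keep $\ply(\mathcal D)$ small I would obtain a cycle basis of $A$ by applying \cref{general_upper_bound} to its $O(k)$-vertex local pieces (giving ply $O(\log^2 k)$ inside $A$) and select the virtual-edge paths so that each edge of $H_i$ serves as a decoration for at most $O(k)$ of them.

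The main obstacle is to make the bound $O(k\log^2 k)$ \emph{independent} of the number of bags $n$: a direct construction that adds $\Theta(k)$ to the ply at every adhesion yields a useless $\Theta(nk)$ estimate. An $n$-independent bound requires showing that each edge $e \in E(H_j)$ lies only on cross-adhesion cycles whose support in $A$ is localised to a bounded neighbourhood of $S_j$ and $S_{j+1}$; combining this localisation with the $O(\log^2 k)$ ply of the local Freedman--Hastings basis and the $O(k)$ decoration bound then yields the target $O(k\log^2 k)$ contribution from $\mathcal D$. Making this localisation rigorous---by producing an $A$-basis that is simultaneously low-ply and supported in bounded windows along the path decomposition, and by showing that paths in $H_i$ can be routed with congestion $O(k)$ between the $\le 2k$ interface vertices in $S_i\cup S_{i+1}$---is the technical heart of the argument.
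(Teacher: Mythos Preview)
Your plan correctly isolates the local contribution $\mathcal B^*=\bigcup_i\mathcal B_i$ (which gives the $b$ term, since the $E(H_i)$ partition $E(G)$) and correctly names the real difficulty: making the cross-adhesion contribution independent of the number $n$ of bags. But the proposal does not resolve this difficulty; it defers it. Your auxiliary graph $A$ still has a path decomposition with $O(k)$-size bags and can contain cycles spanning arbitrarily many adhesions. Applying \cref{general_upper_bound} to the $O(k)$-vertex local pieces of $A$ does \emph{not} produce a generating set for $\mathcal C(A)$: those local cycles only span $\bigoplus_i \mathcal C(A[\text{piece}_i])$, and the quotient you still need to generate is exactly the ``global'' part. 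Producing an $A$-basis that is simultaneously $O(\log^2 k)$-ply \emph{and} supported in bounded windows is the same problem you started with, transferred to $A$; you have not said how to do it. (Falling back on \cref{main} for $A$ gives ply $O(k)$, which with your congestion-$O(k)$ routing yields only $O(k^2)$, not $O(k\log^2 k)$.) The routing claim itself---congestion $O(k)$ for paths in an arbitrary $H_i$ between up to $2k$ interface vertices---is also not justified as stated.

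The paper achieves $n$-independence by a different mechanism. It processes bags incrementally, maintaining a spanning forest $F$ of $G_{i}$. Merging $F^-$ with a spanning forest $F_n$ of $G[B_n]$ produces a graph that is a union of two forests meeting in at most $k$ vertices, hence has basis number $O(\log^2 k)$ (this is \cref{two_trees}, a short consequence of \cref{general_upper_bound}); \cref{merging} then shows $\mathcal B^-\cup\Lambda\cup\Delta$ generates $\mathcal C(G_i)$, and each step adds at most $c\log^2 k$ to the ply of any edge. The key to bounding the total accumulation is a potential argument: for each adhesion $A_j$ one tracks how many edges of the contracted tree $\ssop{F}{A_j}$ are ``$j$-old'' (i.e.\ their $F$-path lies in $G_j$), and shows that whenever the ply on some edge of birth time $j$ increases by $c\log^2 k$, this count drops by at least one. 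Since it starts at $\le 2k-2$ and cannot go negative, no edge ever reaches ply above $b+(2k-2)c\log^2 k$. The mechanism that makes the count drop is the rule for pruning $F^-\cup F_n$ back to a forest: one removes cycle edges of \emph{minimum birth time}, which guarantees that an old edge whose ply just rose (and hence lies on a cycle of $F^+$) gets deleted. This amortisation is the idea your sketch is missing.
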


{\NoHyper\citeauthor{geniet.giocanti:basis}} prove their result on $K_t$-minor-free graphs using a version of \cref{adhesions} with a much larger dependence on $b$ and $k$. Using \cref{adhesions} and the recently established Polynomial Graph Minor Structure Theorem \cite{gorsky.sewery.ea:polynomial} in their argument shows that the function $f$ is polynomial: There exists a constant $c$ such that every $K_t$-minor-free $G$ has $\bn(G)\in O(t^c)$.

\section{The Proofs}

Throughout the remainder of this paper, we will sometimes treat sets of edges in a graph $G$ as subgraphs of $G$. So, for example, for a subgraph $H\subseteq G$ and a set $S\subseteq E(G)$, $H\cup S$ denotes the graph with vertex set $V(H\cup S):=V(H)\cup\bigcup_{vw\in S}\{v,w\}$ and edge set $E(H\cup S):=E(H)\cup S$.

A \defin{spanning forest} $F$ of a graph $G$ is a forest with the property that two vertices $v,w\in V(G)$ are in the same component of $G$ if and only if they are in the same component of $F$.  

\begin{lem}\label{double_forest}
  Let $G_1$ and $G_2$ be two graphs and, for each $i\in \{1,2\}$, let $F_i$ be a spanning forest of $G_i$.  For each pair of vertices $v,w\in V(G_1\cup G_2)$, $v$ and $w$ are in the same component of $G:=G_1\cup G_2$ if and only if $v$ and $w$ are in the same component of $F:=F_1\cup F_2$.
\end{lem}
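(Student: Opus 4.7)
The plan is to prove the two directions of the ``if and only if'' separately. The forward direction (from $F$ to $G$) is essentially immediate, so the content of the lemma lies in the reverse direction: upgrading connectivity witnessed by arbitrary edges of $G_1\cup G_2$ to connectivity witnessed only by edges of $F_1\cup F_2$.

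First I would observe that since $F_i$ is a spanning forest of $G_i$, we have $V(F_i)=V(G_i)$, so $V(F)=V(G)$, and every edge of $F$ is an edge of $G$. This already gives the ``only if'' direction: a path from $v$ to $w$ in $F$ is a walk in $G$, so $v$ and $w$ lie in the same component of $G$.

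For the ``if'' direction, suppose $v$ and $w$ are in the same component of $G$ and fix a path $v=v_0,v_1,\ldots,v_\ell=w$ in $G$. Each edge $v_{j-1}v_j$ belongs to $E(G_1)\cup E(G_2)$, so I can choose an index $i_j\in\{1,2\}$ with $v_{j-1}v_j\in E(G_{i_j})$. In particular, $v_{j-1}$ and $v_j$ lie in the same component of $G_{i_j}$, and since $F_{i_j}$ is a spanning forest of $G_{i_j}$, they lie in the same component of $F_{i_j}\subseteq F$. Concatenating the resulting walks in $F$ from $v_{j-1}$ to $v_j$ over $j=1,\ldots,\ell$ produces a walk in $F$ from $v$ to $w$, so $v$ and $w$ lie in the same component of $F$.

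There is no real obstacle here; the only subtle point is making sure the spanning forest definition is used correctly, namely that ``same component of $G_i$'' transfers to ``same component of $F_i$'' (which is exactly the defining property of a spanning forest), and that the choice of $i_j$ at each step is harmless because the endpoints $v_{j-1}$ and $v_j$ are shared between the successive subwalks, allowing concatenation.
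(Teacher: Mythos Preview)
Your argument is correct and essentially identical to the paper's: both take a path in $G$, replace each edge $v_{j-1}v_j$ by a path in the appropriate $F_{i_j}$, and concatenate. One minor slip: you have the ``if'' and ``only if'' labels reversed (in ``$A$ iff $B$'', the ``only if'' direction is $A\Rightarrow B$, here ``same component of $G$'' $\Rightarrow$ ``same component of $F$''), but the mathematics is fine.
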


\begin{proof}
  Let $v$ and $w$ be any two vertices of $G$.  If $v$ and $w$ are in different components of $G$ then they are in different components of $F$ since $E(F)\subseteq E(G)$.  If $v$ and $w$ are in the same component of $G$ then there is a path from $P=v_0,\ldots,v_\ell$ from $v$ to $w$ in $G$.  For each $j\in\{1,\ldots,\ell\}$, $v_{j-1}v_j$ is an edge of $G_i$ for at least one $i\in\{1,2\}$. Therefore, there exists a path $P_j$ in $F_i$ from $v_{j-1}$ to $v_{j}$ for at least one $i\in\{1,2\}$.  Therefore, the concatenation of the paths $P_1,\ldots,P_\ell$ is a walk in $F$ from $v$ to $w$, so $v$ and $w$ are in the same component of $F$.
\end{proof}

An edge $e$ in a graph $H$ is a \defin{cycle edge} if at least one cycle in $H$ contains $e$.  If $e$ is a cycle edge in a graph $H$ then, for any two vertices $v,w\in V(H)$, $v$ and $w$ are in the same component of $H$ if and only if they are in the same component of $H-e$.  Thus, one can obtain a spanning forest of $H$ by repeatedly removing an arbitrary cycle edge.

\begin{lem}\label{incremental}
  Let $G$ be a graph, let $v$ be a vertex of $G$, let $F^-$ be a spanning forest of $G-v$, let $\mathcal{B}^-$ be a cycle basis for $G-v$, and let $\Delta$ be a cycle basis for $F^+:=F^-\cup \{vw: w\in N_G(v)\}$.  Then $\mathcal{B}:=\mathcal{B}^-\cup\Delta$ is a generating set of $\CC(G)$.
\end{lem}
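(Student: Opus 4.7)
The plan is to show that every $C\in\CC(G)$ can be decomposed as $C=C_0\oplus T_C$, where $C_0\in\CC(G-v)$ and $T_C\in\CC(F^+)$. Once such a decomposition is in hand, $C_0$ is generated by $\mathcal{B}^-$ and $T_C$ is generated by $\Delta$, so $C=C_0\oplus T_C$ is generated by $\mathcal{B}^-\cup\Delta=\mathcal{B}$, which is what we want.

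Fix an Eulerian subgraph $C$ of $G$ and list the edges of $C$ incident to $v$ as $vw_1,\ldots,vw_{2m}$ (an even number because $v$ has even degree in $C$). The key preliminary step is a parity computation: for each component $K$ of $F^-$, the number of indices $i$ with $w_i\in V(K)$ is even. Since $F^-$ is a spanning forest of $G-v$, its components coincide with those of $G-v$, so every edge of $G$ incident to a vertex of $K$ either has both endpoints in $V(K)$ or is of the form $vu$ with $u\in V(K)$. Summing $\deg_C(u)$ over $u\in V(K)$ yields twice the number of $C$-edges inside $K$ plus the number of $w_i$ in $V(K)$; the sum is even because all $C$-degrees are even, forcing the latter count to be even as well.

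Next, I would construct $T_C$ explicitly. Within each component of $F^-$, pair up the $w_i$ lying in that component arbitrarily; by the previous step, this is possible. For every pair $(a,b)$, the edges $va$ and $vb$ together with the unique $a$--$b$ path in $F^-$ form a cycle $Q_{a,b}$ of $F^+$. Define $T_C:=\bigoplus Q_{a,b}$, taken over all pairs. Then $T_C\subseteq F^+$, and $T_C$ is Eulerian as a symmetric difference of cycles. For each $i$, the edge $vw_i$ appears in exactly one of the cycles $Q_{a,b}$ (namely the one for the pair containing $w_i$), and no other edge incident to $v$ appears in any $Q_{a,b}$; hence the edges of $T_C$ incident to $v$ are exactly $\{vw_1,\ldots,vw_{2m}\}$.

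To finish, set $C_0:=C\oplus T_C$. Since $C$ and $T_C$ are both Eulerian, so is $C_0$; and since they share the same edges at $v$, these cancel in the symmetric difference, so $C_0$ is a subgraph of $G-v$ and lies in $\CC(G-v)$. The assembly sketched at the start then completes the proof. The main obstacle is the component-wise parity argument that enables the pairing in the third paragraph; once that is established, the remaining steps are purely formal.
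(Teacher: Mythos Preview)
Your proof is correct and follows the same core strategy as the paper: cancel the edges of $C$ at $v$ against cycles of $F^+$, leaving an Eulerian subgraph of $G-v$ that $\mathcal{B}^-$ already generates. The paper streamlines this by only treating a single cycle $C$ (which suffices, since cycles generate $\CC(G)$); then $v$ has exactly two neighbours $u,w$ on $C$, the path $C-v$ itself witnesses that $u$ and $w$ lie in the same component of $F^-$, and the component-wise parity argument you identify as the main obstacle becomes unnecessary.
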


\noindent
Before proving \cref{incremental}, we note that the graph $F^+$ in the statement of \cref{incremental} consists of a forest $F^-$ (an outerplanar graph) plus one additional vertex $v$ adjacent to some vertices of $F^-$, so $F^+$ is a planar graph, which implies that $\bn(F^+)\le 2$.  When applying \cref{incremental}, this allows us to choose $\Delta$ so that $\ply(\Delta)\le 2$ and therefore $\ply(\mathcal{B})\le\ply(\mathcal{B}^-)+2$.

\begin{proof}[Proof of \cref{incremental}]
  Let $C$ be a cycle in $G$. We must show that $\mathcal{B}$ generates $C$.  If $C\subseteq G-v$ then $\mathcal{B}^-\subseteq\mathcal{B}$ generates $C$.  Otherwise, $C$ contains a three vertex path $P_{uvw}:=uvw$ with $u,w\in N_G(v)$.  Since $C-v\subseteq G-v$ is a path from $u$ to $w$, $u$ and $w$ are in the same component of $G-v$ and are therefore in the same component of $F^-$.  Let $P$ be the unique path from $u$ to $w$ in $F^-$ and let $C':=P\cup P_{uvw}$ be the unique cycle in $F^+$ that includes $P_{uvw}$.  By definition, $\Delta$ generates $C'$.  Finally, observe that $C\oplus C'\subseteq G-v$ is an Eulerian subgraph of $G-v$, so $\mathcal{B}^-$ generates $C\oplus C'$.  Therefore $\mathcal{B}=\mathcal{B}^-\cup\Delta$ generates $C'\oplus (C\oplus C')=C$.
\end{proof}
Let $F$ be a forest and let $S\subseteq V(F)$.  We denote by \defin{$\sop{F}{S}$}, the vertex-minimal subgraph of $F$ with the property that $v$ and $w$ are in the same component of $F$ if and only if $v$ and $w$ are in the same component of $\sop{F}{S}$, for each $v,w\in S$.  From this definition, it follows that every vertex in $V(\sop{F}{S})\sm S$ has degree at least $2$.  We denote by \defin{$\ssop{F}{S}$}, the labelled topological minor of $\sop{F}{S}$ obtained by suppressing every vertex $v\in V(\sop{F}{S})\sm S$ with $\deg_{\sop{F}{S}}(v)=2$.\footnote{To \defin{suppress} a degree-$2$ vertex $v$ with neighbours $u$ and $w$, we remove $v$ and replace it with the edge $uw$.}

\begin{lem}\label{edge_counter}
  For every forest $F$ and every $S\subseteq V(F)$ with $|S|\ge 2$, $\ssop{F}{S}$ has at most $2|S|-2$ vertices.
\end{lem}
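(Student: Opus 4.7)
The plan is a degree-sum counting argument, based on two structural properties of $\ssop{F}{S}$. First I would observe that $\ssop{F}{S}$ is itself a forest (suppression preserves acyclicity) and that $S\subseteq V(\ssop{F}{S})$ (suppression only deletes non-$S$ vertices). Second, I would argue that every vertex $v\in V(\ssop{F}{S})\sm S$ satisfies $\deg_{\ssop{F}{S}}(v)\ge 3$: by the definition of $\sop{F}{S}$ such a $v$ has degree at least $2$ in $\sop{F}{S}$, and the suppression step has removed every non-$S$ vertex of $\sop{F}{S}$ whose degree is exactly $2$.  In particular, every leaf of $\ssop{F}{S}$ lies in $S$.

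With these properties in hand, I would bound each component $T$ of $\ssop{F}{S}$ separately.  Let $n=|V(T)|$, let $s=|V(T)\cap S|$, and let $\ell$ be the number of leaves of $T$.  If $n=1$, the lone vertex must lie in $S$, so $n=s=1$.  Otherwise $n\ge 2$, in which case $\ell\le s$, every non-leaf vertex in $S$ has degree at least $2$, and every non-leaf vertex outside $S$ has degree at least $3$.  Since $T$ is a tree, summing degrees gives
\[
  2(n-1)=\sum_{v\in V(T)}\deg_T(v)\ge \ell+2(s-\ell)+3(n-s),
\]
which rearranges to $n\le s+\ell-2\le 2s-2$.

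Finally, I would sum these per-component inequalities, using $|S|=\sum_T|V(T)\cap S|$, to obtain $|V(\ssop{F}{S})|\le 2|S|-2$.  The only delicate point, and where I expect the main (minor) obstacle, is the bookkeeping for singleton components: each one contributes $n=s=1$, which beats the target bound $2s-2=0$ by $1$.  This is absorbed by the hypothesis $|S|\ge 2$: either some component has $n\ge 2$, so its slack of $-2$ in the inequality $n\le 2s-2$ covers at most two singletons, or every component is a singleton and then $|V(\ssop{F}{S})|=|S|\le 2|S|-2$.  The degree-sum inequality is the standard ingredient, so once the leaf-in-$S$ property is established the rest is routine case analysis.
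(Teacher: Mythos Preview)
Your argument is correct and rests on the same idea as the paper's: non-$S$ vertices of $\ssop{F}{S}$ have degree at least $3$, so a degree-sum (equivalently, leaf-count) inequality bounds $|V(\ssop{F}{S})|$.  The paper streamlines the endgame by first adding a minimal set of edges to $\ssop{F}{S}$ to make it a single tree $T$ and then applying the leaf-count inequality once to $T$, which sidesteps your per-component summation and the singleton bookkeeping; your treatment of that bookkeeping is a touch loose as phrased (``covers at most two singletons'' does not literally dispose of, say, one non-singleton component together with three or more singletons), but the one-line summation $\sum_T (n_T-2s_T)\le -(\#\text{singletons})-2(\#\text{non-singletons})\le -2$ closes it cleanly.
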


\begin{proof}
  Let $T$ be a supergraph of $\ssop{F}{S}$ obtained by adding a minimal set of edges so that $T$ is connected.  Then $T$ is a tree.  Let $\ell$ be the number of leaves of $T$.  By the definition of $\ssop{F}{S}$, each vertex $v\in V(T)\sm S$ has $\deg_{T}(v)\ge \deg_{\ssop{F}{S}}(v)\ge 3$.  A well-known fact about trees is that the number of leaves in a tree with $t$ vertices of degree at least three is at least $t+2$.  Therefore, $|S|\ge \ell\ge |V(T)|-|S|+2$.  Rewriting this gives $|V(\ssop{F}{S})|=|V(T)|\le 2|S|-2$.  
\end{proof}

For a forest $F$ and two vertices $v$ and $w$ in the same component of $F$, let $P_F(v,w)$ denote the unique path in $F$ from $v$ to $w$.  For a graph $G$, generating set $\mathcal{B}$ of  $G$, a spanning forest $F$ of $G$ and a pair of vertices $v$ and $w$ in the same component of $G$ define $\ply_F(v,w,\mathcal{B})=\max\{\ply(e,\mathcal{B}):e\in E(P_F(v,w))\}$.  When $vw=e$ is an edge of some auxiliary graph (such as $\ssop{F}{S}$ for some $S\subseteq V(F)$), we use the shorthands $P_F(e):=P_F(v,w)$ and $\ply_F(e,\mathcal{B})=\ply_F(v,w,\mathcal{B})$.

\subsection{Proof of \cref{main}}
\label{proof_of_one}

Let $G$ be an $n$-vertex graph of pathwidth at most $t$. A path decomposition $B_0,\ldots,B_n$ of $G$ is \defin{normal} if
\[
   |B_i| = \begin{cases}
      i & \text{for $i\le t$} \\
      t+1 & \text{for $i\in\{t+1,\ldots,n\}$}
      \end{cases}
\]
and $|B_i\sm B_{i-1}|=1$, for each $i\in\{1,\ldots,n\}$.  It is well known that any graph of pathwidth at most $t$ has a normal path decomposition of width at most $t$.

\begin{proof}[Proof of \cref{main}]
  Graphs of pathwidth at most $1$ are forests and therefore have basis number zero. Thus $\bn(G)=0\le 4\pw(G)$ when $\pw(G)\le 1$.  
  Now, let $G$ be a graph of pathwidth at most $t\ge 2$ and let $B_0,\ldots,B_n$ be a normal path decomposition of $G$. We will show the existence of a generating set $\mathcal{B}$ for $G$ with $\ply(\mathcal{B})\le 4t$.
  We will prove the following technical statement, which is amenable to a proof by induction, and which immediately implies \cref{main}.

  \begin{clm}\label{technical}
    There exists a spanning forest $F$ of $G$ and a $4t$-generating set $\mathcal{B}$ of $\CC(G)$ such that, for each $c\in\{0,\ldots,2t-1\}$, the number of edges $e$ of $\ssop{F}{B_n}$ with $\ply_F(e,\mathcal{B})\ge 2c+1$ is at most $2t-c-1$.
  \end{clm}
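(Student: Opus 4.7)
The plan is to prove Claim \ref{technical} by induction on $i \in \{0, \ldots, n\}$, building spanning forests $F_0 \subseteq \cdots \subseteq F_n =: F$ and generating sets $\mathcal{B}_0 \subseteq \cdots \subseteq \mathcal{B}_n =: \mathcal{B}$ for the subgraphs $G_i := G[B_0 \cup \cdots \cup B_i]$, maintaining the claim's distributional invariant for $\ssop{F_i}{B_i}$ at every step. The base case $i = 0$ is trivial, since $G_0$ has no edges.

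For the inductive step, let $v_{i+1}$ denote the unique vertex in $B_{i+1} \setminus B_i$ and apply \cref{incremental} with $v := v_{i+1}$, $F^- := F_i$, and $\mathcal{B}^- := \mathcal{B}_i$.  Because $F^+ := F_i \cup \{v_{i+1} w : w \in N_{G_{i+1}}(v_{i+1})\}$ is planar, a $2$-basis $\Delta$ of $F^+$ exists; I would take $\Delta$ to be the set of bounded faces in a planar embedding of $F^+$, so that its support in $F_i$ is exactly $\sop{F_i}{N(v_{i+1})}$, while every edge of $F^+$ has ply at most $2$ in $\Delta$.  Set $\mathcal{B}_{i+1} := \mathcal{B}_i \cup \Delta$ and $F_{i+1} := F_i \cup \{v_{i+1} w^*\}$ for a carefully chosen neighbour $w^* \in N_{G_{i+1}}(v_{i+1})$.

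Writing $a_c^{(j)}$ for the number of macro-edges of $\ssop{F_j}{B_j}$ with ply at least $2c+1$ in $\mathcal{B}_j$, the analysis proceeds as follows.  Since $N(v_{i+1}) \subseteq B_i \cap B_{i+1}$, every bumped edge of $F_i$ lies on a macro-edge of $\ssop{F_i}{B_i}$ that descends (possibly after a merge) into $\ssop{F_{i+1}}{B_{i+1}}$, and the $+2$ bump maps a macro-edge of ply category ``$\ge 2c-1$'' into ``$\ge 2c+1$''.  Naively this gives $a_c^{(i+1)} \le a_{c-1}^{(i)} + [c = 0] \le 2t - c$, which is one above the required threshold.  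The missing decrement comes from the structural update from $\ssop{F_i}{B_i}$ to $\ssop{F_{i+1}}{B_{i+1}}$: in the sliding regime ($i \ge t+1$), the forgotten vertex $u \in B_i \setminus B_{i+1}$ is removed from the terminal set, and a case analysis on $\deg_{\ssop{F_i}{B_i}}(u)$ shows that either a macro-edge is deleted (when $u$ is a leaf) or two adjacent macro-edges are merged into one (when $u$ has degree $2$), exactly absorbing the bump shift in each ply category; in the growing regime ($i \le t$), \cref{edge_counter} bounds the total macro-edge count by $2i - 1$, handling small $c$, while the maximum ply reachable after $i+1$ bumps is at most $2(i+1)$, handling large $c$.

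The main obstacle will be the bookkeeping in the sliding regime: one must choose $w^*$ and identify the deleted or merged macro-edge so that the cancellation works simultaneously for every $c \in \{0, \ldots, 2t-1\}$.  The key leverage is that bumped macro-edges all sit in the subforest $\ssop{F_i}{N(v_{i+1})} \subseteq \ssop{F_i}{B_i}$ and that the new macro-edge $v_{i+1} w^*$ contributes only to $a_0$, which reduces the argument to checking a finite number of adjacency configurations around $u$.  Once the invariant is verified at $i = n$, the forest $F := F_n$ and generating set $\mathcal{B} := \mathcal{B}_n$ satisfy the structural part of the claim, and the global bound $\ply(\mathcal{B}) \le 4t$ follows by tracking each edge $e \in F$: the invariant forces $\ply(e, \mathcal{B}_j) \le 4t - 2$ at the last step $j$ at which $e \in \sop{F_j}{B_j}$, so $e$ can absorb at most one further $+2$ bump at step $j+1$ before leaving the active subforest for good.
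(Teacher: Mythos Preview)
Your proposal has a genuine gap in the sliding-regime cancellation argument. You build a monotone chain $F_0 \subseteq F_1 \subseteq \cdots \subseteq F_n$ by adding a single edge $v_{i+1}w^*$ at each step, and you rely entirely on the forgotten vertex $u \in B_i \setminus B_{i+1}$ to supply the missing $-1$ in every ply category $c$. But you have no control over the ply of the macro-edges incident to $u$: the vertex $u$ is dictated by the path decomposition, and nothing in your construction ties it to the high-ply region of $\ssop{F_i}{B_i}$. Concretely, if the macro-edge(s) at $u$ have ply at most $2c-2$ while two other macro-edges (far from $u$) sit at ply exactly $2c-1$ and both get bumped by $\Delta$, then after the update you still have two macro-edges with ply $\ge 2c+1$, and deleting or merging at $u$ touches only lower categories. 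The promised ``finite case analysis around $u$'' cannot rescue this, because the obstruction is global (the high-ply edges need not be adjacent to $u$), and the choice of $w^*$ only affects the single new macro-edge of ply $\le 2$. Your case split also omits the case $\deg_{\ssop{F_i}{B_i}}(u)\ge 3$, where removing $u$ from the terminal set deletes or merges nothing at all.

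The paper's proof avoids this by abandoning the monotonicity $F_i \subseteq F_{i+1}$. It forms $F^+ := F^- \cup \{vw : w \in N_G(v)\}$ and then obtains $F$ by repeatedly deleting a cycle edge of $F^+$ of \emph{maximum} ply in $\mathcal{B}$. The crucial consequence is that whenever some category $c$ would otherwise overflow (so some edge got bumped to ply $\ge 2c+1$), the first removed edge $\hat{e}_1$ is a cycle edge with $\ply(\hat{e}_1,\mathcal{B}) \ge 2c+1$, and the macro-edge of $\ssop{F^-}{B_{n-1}}$ containing $\hat{e}_1$ is an element of $S^-$ with no $\varphi$-preimage in $S$. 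Thus the decrement is guaranteed precisely in the categories where it is needed, for all $c$ simultaneously. This deliberate removal of high-ply edges from the old forest is the missing idea in your approach; without allowing $F^-\not\subseteq F$, there is no mechanism to force the lost macro-edge to land in the right category.
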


  We now prove \cref{technical} by induction on $n$. The base cases $n\in\{0,1,2\}$ are trivial: In these cases $G$ has at most two vertices and no cycles, so taking $\mathcal{B}:=\emptyset$ and $F$ to be any spanning forest of $G$ satisfies the requirements, for $t\ge 1$.

  Now assume that $n\ge 3$.  Let $v$ be the unique vertex in $B_n\sm B_{n-1}$.  Then $B_0,\ldots,B_{n-1}$ is a normal path decomposition of $G-v$.  By the inductive hypothesis, $G-v$ has a spanning forest $F^-$ and a generating set $\mathcal{B}^-$ with $\ply(\mathcal{B}^-)\le 4t$ such that for each $c'\in\{0,\ldots,2t-1\}$ we have
\begin{equation}
\left|\{\, e\in \ssop{F^-}{B_{n-1}} :\ply_{F^-}(e,\mathcal{B}^-)\ge 2c'+1 \,\}\right| \leq 2t-c'-1.  \label{tech_induction}
\end{equation}

  Let $F_n:=G[B_n]-E(G[B_{n-1}\cap B_n])$ and observe that $V(F_n)= B_n$ and $E(F_n)=\{vw:w\in N_{B_{n-1}}(v)\}$.
  Since $F^+:=F^-\cup F_n$ is a planar graph, it has a cycle basis $\Delta$ with $\ply(\Delta)\le 2$. By \cref{incremental}, $\mathcal{B}:=\mathcal{B}^-\cup\Delta$ is a generating set for $\mathcal{C}(G)$. 

  What remains is to define the spanning forest $F$ of $G$ that satisfies the conditions of \cref{technical}.  
  By \Cref{double_forest}, two vertices are in the same component of $G$ if and only if they are in the same component of $F^+$.  To obtain $F$, we find a sequence of edges $\hat{e}_1,\ldots,\hat{e}_r$ such that $\hat{e}_i$ is a cycle edge in $F^+-\{\hat{e}_1,\ldots,\hat{e}_{i-1}\}$ that maximizes $\ply(\hat{e}_i,\mathcal{B})$. This process ends when $F^+-\{\hat{e}_1,\ldots,\hat{e}_r\}$ is a forest, at which point we take $F:=F^+-\{\hat{e}_1,\ldots,\hat{e}_r\}$.  Since this process only removes cycle edges, the resulting graph $F$ is a spanning forest of $F^+$ and therefore a spanning forest of $G$.

  We now show that $\mathcal{B}$ and $F$ satisfy the requirements of \cref{technical}.  
  We begin by showing that $\ply(\mathcal{B})\le 4t$.  
  Let $\hat{e}$ be an edge of $G$. If no cycle in $\Delta$ includes $\hat{e}$, then $\ply(\hat{e},\mathcal{B})=\ply(\hat{e},\mathcal{B}^-)\le\ply(\mathcal{B}^-)\le 4t$, by the inductive hypothesis.  
  Now suppose that at least one cycle $C$ in $\Delta$ includes $\hat{e}$.  Since $F^-=F^+-v$ is a forest, $C$ includes $v$.  
  If $\hat{e}$ is incident to $v$, then $\ply(\hat{e},\mathcal{B})=\ply(\hat{e},\Delta)\le 2\le 4t$.  Now suppose that $\hat{e}$ is an edge of $C-v$. 
  Then $C-v=P_{F^-}(u,w)$ for two vertices $u,w\in N_G(v)\subseteq B_{n-1}$.  Since $\{u,w\}\subseteq B_{n-1}$, $P_{F^-}(u,w)\subseteq \sop{F^-}{B_{n-1}}$.  Therefore, there exists an edge $e$ in $\ssop{F^-}{B_{n-1}}$ such that $\hat{e}\in E(P_{F^-}(e))$.  Observe that $P_{F^-}(e)\subseteq P_{F^-}(u,w)$.  \Cref{tech_induction} with $c'=2t-1$ states that $\ssop{F^-}{B_{n-1}}$ has at most $2t-c'-1=2t-(2t-1)-1=0$ edges $e'$ with $\ply_{F^-}(e',\mathcal{B}^-)\ge 2c'+1=4t-1$.  
 Therefore,  $\ply_{F^-}(e,\mathcal{B}^-)\le 4t-2$.
 By definition, we have  $\ply_F(v,w,\mathcal{B})=\max\{\ply(e):e\in E(P_F(v,w))\}$.
Since $\hat{e}$ is one of edges on that path, we infer that $\ply(\hat{e},\mathcal{B}^-)\le \ply_{F^-}(e,\mathcal{B}^-)$
 Therefore we have 
 \begin{align*}
    \ply(\hat{e},\mathcal{B})= &\ply(\hat{e},\mathcal{B}^-)+\ply(\hat{e},\Delta)\\
    \le&\ply_{F^-}(e,\mathcal{B}^-)+2\\
    \le & 4t-2+2=4t
 \end{align*}
Therefore, $\ply(e,\mathcal{B})\le 4t$ for each edge $e$ of $G$, so $\ply(\mathcal{B})\le 4t$.

  We now get to the crux of the proof; showing that $F$ satisfies the requirements of \cref{technical} for all $c\in\{0,\ldots,2t-1\}$.  For $c=0$, this follows immediately from \cref{edge_counter}. Indeed, $|B_n|\le t+1$ so, by \cref{edge_counter}, $\ssop{F}{B_n}$ has at most $2|B_n|-3 \le 2(t+1)-3=2t-1$ edges. 
  
  Next we consider the case where $c\in\{1,\ldots,2t-1\}$.  Let 
  \[ 
     S:=\{e\in E(\ssop{F}{B_n}):\ply_{F}(e,\mathcal{B})\ge 2c+1\}
  \]
  and let $m:=|S|$.  
  \[
    S^-:=\{e\in E(\ssop{F^-}{B_{n-1}}):\ply_{F^-}(e,\mathcal{B}^-)\ge 2(c-1)+1\}
  \]
  and let $m^-:=|S^-|$. By the inductive hypothesis, $m^- \le 2t-(c-1)-1$.  We will now prove that $m^- \ge m+1$, so $m+1\le m^- \le 2t-(c-1)-1$ and rewriting this inequality gives $m\le 2t-c-1$.  We begin by describing an injective function $\varphi:S\to S^-$. The existence of $\varphi$ immediately implies that $m=|S|\le|S^-|= m^-$.  To establish that $m\le m^--1$, we will then show that there is at least one element of $S^-$ with no preimage in $\varphi$.

  Let $e$ be an edge of $\ssop{F}{B_n}$ that is in $S$.
  Since $\ply_F(e,\mathcal B)=\max\{\ply(f,\mathcal B): f\in E(P_F(e))\}$,
there exists an edge $\hat e\in E(P_F(e))$ with
$\ply(\hat e,\mathcal B)=\ply_F(e,\mathcal B)\ge 2c+1$.
  Since $2c+1\ge 3$, the edge $\hat{e}$ is not incident to $v$. Therefore $\hat{e}$ is an edge of $F^-$.  
  We note that $\ply(\hat{e},\mathcal{B})=\ply(\hat{e},\mathcal{B}^-)+\ply(\hat{e},\Delta)$.
  Since at most two cycles in $\Delta$ use the edge  $\hat{e}$, $\ply(\hat{e},\mathcal{B}^-)\ge\ply(\hat{e},\mathcal{B})-2\ge 2(c-1)+1$.  

  Since $e$ is an edge of $\ssop{F}{B_n}$, $P_F(e)$ is a path in $\sop{F}{B_n}$.  By the minimality of $\sop{F}{B_n}$, there are two vertices $u,w\in B_n$ such that the path from $u$ to $w$ in $\sop{F}{B_n}$ includes $P_F(e)$.  Since $\hat{e}$ is not incident to $v$ and $N_G(v)\subseteq B_{n}\sm\{v\}$, we may assume that $v\not\in\{u,w\}$.\footnote{If, for example $v=u$ then $P_F(v,w)-v$ is a path from $u'$ to $w$ in $F$ with $u',w\in B_{n}\sm\{v\}$ that includes the edge $\hat{e}$.} Therefore $P_F(u,w)$ is a path in $F$ that avoids $v$, which implies that $P_F(u,w)$ is a path in $F^-$. Therefore $P_F(u,w)=P_{F^-}(u,w)$.  Since $B_n\sm B_{n-1}=\{v\}$, $u,w\in B_{n-1}$. Therefore $P_{F^-}(u,w)$ is a path in $\sop{F^-}{B_{n-1}}$.  Therefore $\ssop{F^-}{B_{n-1}}$ contains an edge $e'$ such that $P_{F^-}(e')$ contains the edge $\hat{e}$, so $\ply_{F^-}(e',\mathcal{B}^-)\ge\ply(\hat{e},\mathcal{B}^-)\ge 2(c-1)+1$.  Therefore $e'\in S^-$ and we set $\mathdefin{\varphi(e)}:=e'$.  Observe that $P_{F}(e)=P_{F^-}(e)$ contains $P_{F}(e')=P_{F^-}(e')$ as a subpath.  The fact that 
  $P_{F^-}(e')\subseteq P_{F^-}(e)$ implies that $\varphi$ is an injective function from $S$ to $S^-$. The existence of $\varphi$ already implies that $m^-\ge m$.

  To show that $m^-\ge m+1$, we will show that there is an edge $e_1\in S^-$ that has no preimage in $\varphi$. By the inductive hypothesis, the number $m^\star$ of edges $e$ of $\ssop{F^-}{B_{n-1}}$ with $\ply(e,\mathcal{B}^-)\ge 2c+1$ satisfies $m^\star\le 2t-c-1$.  If $m\le m^\star$ then there is nothing more to prove, so we may assume that $m\ge m^\star+1$.  Therefore, there exists an edge $e$ of $\ssop{F}{B_n}$ with $\ply_{F^-}(e,\mathcal{B})\ge 2c+1$ and $\ply_{F^-}(e,\mathcal{B}^-)< 2c+1$.  Therefore $P_{F^-}(e)$ includes an edge $\hat{e}$ with $\ply(\hat{e},\mathcal{B})\ge 2c+1$ and $\ply(\hat{e},\mathcal{B}^-)< 2c+1$.
  Therefore, at least one cycle in $\Delta$ includes $\hat{e}$. Therefore $\Delta$ includes at least one cycle and so $F^+$ includes at least one cycle.  Therefore, there is at least one edge $\hat{e}_1$ in $F^+$ that is not in $F$. The edge $\hat{e}_1$ was chosen to remove from $F^+$ because $\hat{e}_1$ is a cycle edge of $F^+$ that maximizes $\ply(\hat{e}_1,\mathcal{B})$.  Since $e$ is a cycle edge of $F^+$, $\ply(\hat{e}_1,\mathcal{B})\ge\ply(\hat{e},\mathcal{B})\ge 2c+1$. 
  
  Therefore, $\ply(\hat{e}_1,\mathcal{B}^-)\ge \ply(\hat{e}_1,\mathcal{B})-2\ge 2(c-1)+1$.  Then $C-v$ is a path in $F^-$ with endpoints $u,w\in B_{n}\sm\{v\}\subseteq B_{n-1}$.  Therefore $C-v=P_{F^-}(u,w)$ is a path in $\sop{F^-}{B_{n-1}}$. Therefore $\ssop{F^-}{B_{n-1}}$ contain an edge $e_1$ such that $P_{F^-}(e_1)$ contains the edge $\hat{e}_1$. Therefore $\ply_{F^-}(e_1,\mathcal{B}^-)\ge \ply(\hat{e}_1,\mathcal{B}^-)\ge 2(c-1)+1$.  However, since $\hat{e}_1$ is not in $F$, there is no edge $e^\star$ of $\ssop{F}{B_n}$ such that $P_F(e^\star)$ contains $\hat{e}_1$. Therefore, there is no edge $e^\star$ of $\ssop{F}{B_n}$ such that $\varphi(e^\star)=e_1$.  Therefore, the image of $\varphi$ contains at least $m$ edges in $S^-$ and there is at least one edge $e^\star\in S^-$ that is not in the image of $\varphi$. Therefore $m^-=|S^-|\ge|S|+1= m+1$, which completes the proof.
\end{proof}

\subsection{Proof of \cref{adhesions}}
\label{proof_of_two}

Our proof of \cref{adhesions} makes use of the following generalization of \cref{incremental}:\footnote{\cref{incremental} is the special case in which $H_1:=G-v$, $F:=F_1$ and $H_2:=F_2:=\{vw: w\in N_G(v)\}$.}

\begin{lem}\label{merging}
  Let $H_1$ and $H_2$ be two graphs; for each $i\in\{1,2\}$, let $F_i$ be spanning forest of $H_i$ and let $\mathcal{B}_i$ be a cycle basis for $H_i$; and let $\Delta$ be a cycle basis for $F_1\cup F_2$.  Then $\mathcal{B}:=\mathcal{B}_1\cup \mathcal{B}_2\cup \Delta$ is a generating set  for $\CC(H)$, where $H:=H_1\cup H_2$.
\end{lem}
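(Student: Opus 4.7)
The plan is to mimic the structure of \cref{incremental}: take an arbitrary cycle $C$ in $H$ and argue that we can replace the parts of $C$ lying in $H_1$ and $H_2$ by corresponding paths in the spanning forests $F_1$ and $F_2$, paying only elements of $\mathcal{B}_1 \cup \mathcal{B}_2$, and then cover what remains (which lives in $F_1 \cup F_2$) using $\Delta$.

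More concretely, given a cycle $C$ in $H$, I would first partition $E(C)$ into two sets $E_1$ and $E_2$ with $E_i \subseteq E(H_i)$ (edges that belong to both $H_1$ and $H_2$ can be assigned to either side arbitrarily). Let $C_i$ be the subgraph with edge set $E_i$; as $C_i$ is the restriction of a cycle to a subset of edges, $C_i$ is an edge-disjoint union of maximal paths $P_{i,1}, \ldots, P_{i,\ell_i}$. For each such path $P_{i,j}$ with endpoints $u_{i,j}, w_{i,j}$, the endpoints lie in the same component of $H_i$ (they are connected by $P_{i,j}$ itself), hence also in the same component of $F_i$ by the definition of spanning forest; so the $F_i$-path $Q_{i,j} := P_{F_i}(u_{i,j}, w_{i,j})$ exists. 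Let
\[
  D_i := \bigoplus_{j=1}^{\ell_i} Q_{i,j}.
\]

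Next I would check the algebraic identity that makes everything fit together. Because the paths $P_{i,j}$ are edge-disjoint, $C_i = \bigoplus_j P_{i,j}$, and therefore
\[
  C_i \oplus D_i = \bigoplus_{j=1}^{\ell_i}\bigl(P_{i,j} \oplus Q_{i,j}\bigr).
\]
Each summand $P_{i,j} \oplus Q_{i,j}$ is a closed walk in $H_i$ and so an Eulerian subgraph of $H_i$; hence $C_i \oplus D_i \in \CC(H_i)$ is generated by $\mathcal{B}_i$. Since $C = C_1 \oplus C_2$, the symmetric difference
\[
  C \oplus (C_1 \oplus D_1) \oplus (C_2 \oplus D_2) \;=\; D_1 \oplus D_2
\]
is an element of $\CC(F_1 \cup F_2)$ (as a symmetric difference of Eulerian graphs that only uses edges of $F_1 \cup F_2$), and so is generated by $\Delta$. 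Therefore $C$ itself is generated by $\mathcal{B}_1 \cup \mathcal{B}_2 \cup \Delta = \mathcal{B}$, as required.

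The only subtlety I expect is notational: the $D_i$ are genuine \emph{symmetric differences} rather than disjoint unions of paths, because different maximal segments of $C_i$ may have endpoints that produce overlapping $F_i$-paths. Working with symmetric differences throughout (rather than trying to identify $D_i$ with a concrete subgraph) avoids this issue and keeps the computation clean. The rest is a direct verification following the $\oplus$-identity above.
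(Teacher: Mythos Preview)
Your argument is correct and a little more direct than the paper's. The paper sets $A := V(H_1)\cap V(H_2)$ and decomposes $C$ into \emph{$A$-segments} (maximal subpaths with endpoints in $A$ and no internal vertex in $A$), then runs an induction on the number of ``dirty'' segments (those not contained in $F_1\cup F_2$): in each step it replaces a single dirty segment $P\subseteq H_i$ by the $F_i$-path between its endpoints, notes that $P\oplus P_{F_i}(v,w)$ is Eulerian in $H_i$ (hence generated by $\mathcal{B}_i$), decomposes the resulting Eulerian graph into cycles via Veblen, and applies the inductive hypothesis to those cycles. You instead partition $E(C)$ between $H_1$ and $H_2$ and perform all the replacements in one shot, which eliminates both the induction and the Veblen step; the price is a slightly more careful bookkeeping of symmetric differences, which you handle correctly. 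One small gap to patch: your claim that ``$C_i$ is an edge-disjoint union of maximal paths'' fails when $E_{3-i}=\emptyset$, since then $C_i=C$ is a full cycle; but in that case $C\subseteq H_i$ is already generated by $\mathcal{B}_i$, so just dispose of this case at the outset.
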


\begin{proof}
  Let $A:=V(H_1)\cap V(H_2)$. 
  Let $C$ be a cycle in $H$.  We must show that $\mathcal{B}$ generates $C$.  An \defin{$A$-segment} of $C$ is a path in $C$ with both endpoints in $A$ and no internal vertex in $A$.  Note that each $A$-segment $P$ of $C$ has $E(P)\subseteq E(H_1)$ or $E(P)\subseteq E(H_2)$.  An $A$-segment $P$ of $C$ is \defin{clean} if $E(P)\subseteq E(F_1\cup F_2)$ and is \defin{dirty} otherwise.  The proof is by induction on the number, $d$, of dirty $A$-segments of $C$.  If $d=0$, then $E(C)\subseteq E(F_1\cup F_2)$, so $\Delta\subseteq\mathcal{B}$ generates $C$.

  Now assume that $d\ge 1$ and let $P$ be a dirty $A$-segment of $C$ with endpoints $v,w\in A$.  Without loss of generality, we may assume that $E(P)\subseteq E(H_1)$.  Since $P\subseteq H_1$, $v$ and $w$ are in the same component of $H_1$.  Therefore $v$ and $w$ are in the same component of $F_1$, so the path $P_1:=P_{F_1}(v,w)$ exists.  
  Let $C':= P_1\oplus C\oplus P$ and we note that $C'$ is Eulerian.
  Let $C'_1$ be a cycle in $C'$ that contains $E(P_1)$. Let $C_2',\ldots,C'_k$ be a partition of the edges of $C'\oplus C'_1=C'- E(C'_1)$ into pairwise edge-disjoint cycles.  (Such cycles $C'_2,\ldots,C'_k$ exist, by Veblen's Theorem). 
  Except for $A$-segments contained in $P_1$, each $A$-segment of $C_i'$ is also an $A$-segment of $C$, for each $i\in\{1,\dots,k\}$.  
  Since $P$ is dirty and each $A$-segment contained in $P_1$ is clean, the number $d_i$ of dirty $A$-segments in $C'_i$ is less than $d$, for each $i\in\{1,\ldots,k\}$. (In fact $\sum_{i=1}^k d_i < d$.) By the inductive hypothesis, $\mathcal{B}$ generates $C'_i$, for each $i\in\{1,\ldots,k\}$.  
  Therefore $\mathcal{B}$ generates $C'=\bigoplus_{i=1}^k C'_i$.  Finally, notice that $P_1\oplus P$ is an Eulerian subgraph of $H_1$, so $\mathcal{B}_1$ generates $P_1\oplus P$.  Therefore $\mathcal{B}$ generates $C' \oplus P_1 \oplus P
  = \bigl(P_1 \oplus C \oplus P\bigr) \oplus P_1 \oplus P
  = C$.
\end{proof}
 
We also make use of the following result, which follows quickly from \cref{general_upper_bound}:

\begin{lem}\label{two_trees}
  Let $T_1$ and $T_2$ be two trees with $|V(T_1)\cap V(T_2)|\le k$.  Then $\bn(T_1\cup T_2) \in O(\log^2 k)$.
\end{lem}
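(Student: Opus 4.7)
The plan is to reduce $T_1\cup T_2$ to a graph on $O(k)$ vertices with the same basis number and then invoke \cref{general_upper_bound}. Set $A:=V(T_1)\cap V(T_2)$, so $|A|\le k$; if $|A|\le 1$ then $T_1\cup T_2$ is a forest, giving $\bn(T_1\cup T_2)=0$, so assume $|A|\ge 2$.

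First, I would take $F_i:=\sop{T_i}{A}$ and set $H:=F_1\cup F_2$, and show that $\bn(T_1\cup T_2)=\bn(H)$ by proving $\CC(T_1\cup T_2)=\CC(H)$. Given any cycle $C$ of $T_1\cup T_2$, its $T_i$-edges form a vertex-disjoint collection of paths, and every endpoint of such a path is a vertex at which $C$ switches between the two trees, hence lies in $V(T_1)\cap V(T_2)=A$. By the defining property of $\sop{T_i}{A}$, each such path lies in $F_i$, so $C\subseteq H$. The edges of $T_1\cup T_2$ outside $H$ therefore lie on no cycle and contribute ply zero to any cycle basis.

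Next, I would suppress every vertex of $H$ of degree $2$ that does not lie in $A$. Such a vertex $v$ belongs to exactly one $F_i$ (since $v\notin A$), and both its neighbours lie in that $F_i$, so its $H$-degree equals its $F_i$-degree, namely $2$. Degree-$2$ suppression preserves the basis number: at $v$ with neighbours $u,w$, every Eulerian subgraph contains both or neither of $uv,vw$, so a cycle basis of one graph transports to a cycle basis of the other with the common ply of $uv$ and $vw$ becoming the ply of the new edge $uw$. Iterating yields $H':=\ssop{T_1}{A}\cup\ssop{T_2}{A}$, possibly with parallel edges between $A$-vertices; treating $H'$ as a multigraph, or equivalently subdividing once per parallel pair to keep it simple, increases the vertex count by at most a constant factor and leaves $\bn$ unchanged.

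By \cref{edge_counter}, $|V(\ssop{T_i}{A})|\le 2|A|-2\le 2k-2$ for each $i\in\{1,2\}$, so $|V(H')|\le 4k-4$. Applying \cref{general_upper_bound} to $H'$ gives $\bn(H')\in O(\log^2 k)$, and chaining the two equalities yields $\bn(T_1\cup T_2)=\bn(H)=\bn(H')\in O(\log^2 k)$. The one slightly delicate step is verifying that degree-$2$ suppression preserves the basis number, but this follows immediately from the observation that the two edges incident to a degree-$2$ vertex must appear together in every Eulerian subgraph.
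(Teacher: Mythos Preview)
Your proposal is correct and follows essentially the same route as the paper's proof: restrict to $H=\sop{T_1}{A}\cup\sop{T_2}{A}$, pass to the suppressed graph $\ssop{T_1}{A}\cup\ssop{T_2}{A}$ on at most $4k-4$ vertices via \cref{edge_counter}, and apply \cref{general_upper_bound}. The only difference is that where the paper cites an external lemma for the invariance of $\bn$ under degree-$2$ suppression, you supply the argument directly and are explicit about the possible multigraph issue when the two suppressed trees share an edge.
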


\begin{proof}
  Let $A:=V(T_1)\cap V(T_2)$.  Observe that any cycle $C$ in $T_1\cup T_2$ is a cycle in $H:=\sop{T_1}{A}\cup\sop{T_2}{A}$, so it suffices to show that $\bn(H)\in O(\log^2 k)$.  Consider the graph $\tilde{H}:=\ssop{T_1}{A}\cup\ssop{T_2}{A}$.
  It follows from \cite[Lemma 3.6]{biedl} that  $\bn(H)=\bn(\tilde{H})$.  By \cref{edge_counter}, $\tilde{H}$ has at most $4k - 4$ vertices. By \cref{general_upper_bound}, $\bn(\tilde{H})\in O(\log^2 k)$.  Therefore $\bn(H)=\bn(\tilde{H})\in O(\log^2 k)$.
\end{proof}

\begin{proof}[Proof of \cref{adhesions}]
  Let $G$ be a graph and let $B_0,\ldots,B_n$ be a path decomposition of $G$ satisfying conditions \ref{small_adhesions} and \ref{small_bases} of \cref{adhesions}.  Our proof is by induction on $n$, but with a stronger inductive hypothesis that makes use of the following definitions. For each $i\in\{-1,0,\ldots,n\}$, define $G_i:=G[\bigcup_{j=0}^i B_j]$.   For each $i\in\{0,\ldots,n-1\}$, let $A_i:=B_{i}\cap B_{i+1}$.
  For any spanning forest $F$ of $G$ and any $i\in\{0,\ldots,n\}$, we say that an edge $e$ of $\ssop{F}{A_i}$ is \defin{$i$-old} if $P_F(e)\subseteq G_i$.

  We first deal with the cases where $k\le 1$. In such cases $A_i$ is a cutset of size at most $1$ that separates $\bigcup_{j=0}^{i} B_j$ from $\bigcup_{j=i+1}^n B_j$, for each $i\in\{0,\ldots,n-1\}$. Then any cycle of $G$ is contained in $G_i$ for some $i\in\{0,\ldots,n\}$.  For each $i\in\{0,\ldots,n\}$, let $\Lambda_i$ be a cycle basis of $G_i$ having $\ply(\Lambda_i)\le b$.  Then $\mathcal{B}:=\bigcup_{i=0}^n\Lambda_i$ is a generating set for $\CC(G)$ with $\ply(\mathcal{B})\le b$, so $\bn(G)\le b\le b + O(k\log^2 k)$.  We now assume that $k\ge 2$.  Since any graph that satisfies the conditions of \cref{adhesions} for $k=2$ also satisfies these conditions for $k=3$, we now assume that $k\ge 3$.

  \begin{clm}\label{i_old}
    There exist a constant $c>0$ such that for any $A_n\subseteq B_n$ of size at most $k$, there exists a generating set  $\mathcal{B}$ of $\mathcal C(G)$ with $\ply(\mathcal{B})\le b+ (2k-2)c\log^2 k$ and a spanning forest $F$ of $G$ such that, for each $i\in\{0,\ldots,n\}$, $\ssop{F}{A_i}$ has at most
    \begin{equation}
       2k-2-\frac{\max\{\ply(\hat{e},\mathcal{B}):\hat{e}\in E(G_i)\sm E(G_{i-1})\} - b}{c\log^2 k} \label{yucko}
    \end{equation}
    edges that are $i$-old.
  \end{clm}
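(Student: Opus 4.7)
I plan to prove \cref{i_old} by induction on $n$, closely paralleling the argument for \cref{technical} used in \cref{main}. Throughout, for each $i\in\{0,\ldots,n\}$ let $M_i:=\max\{\ply(\hat{e},\mathcal{B}):\hat{e}\in E(G_i)\sm E(G_{i-1})\}$ (with $G_{-1}$ interpreted as the empty graph), and let $M_i^-$ denote the analogous quantity computed with $\mathcal{B}^-$ in place of $\mathcal{B}$. The base case $n=0$ takes $\mathcal{B}$ to be any $b$-basis of $G=H_0$ and $F$ to be any spanning forest; then $M_0\le b$ and $|E(\ssop{F}{A_0})|\le 2|A_0|-2\le 2k-2$ by \cref{edge_counter}, satisfying the only required inequality.

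For the inductive step, I apply the inductive hypothesis to $G_{n-1}$ with its induced path decomposition $B_0,\ldots,B_{n-1}$ and designated adhesion subset $A_{n-1}:=B_{n-1}\cap B_n$ (of size at most $k$), obtaining a generating set $\mathcal{B}^-$ and a spanning forest $F^-$. Hypothesis \ref{small_bases} supplies a $b$-basis $\mathcal{B}_n$ of $H_n$, and I fix any spanning forest $F_n$ of $H_n$. The forests $F^-$ and $F_n$ share only the at most $k$ vertices of $A_{n-1}$, so by the same argument that proves \cref{two_trees}---Biedl's suppression lemma together with \cref{general_upper_bound}---the graph $F^-\cup F_n$ admits a cycle basis $\Delta$ of ply at most $c\log^2 k$ for some absolute constant $c$. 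I set $\mathcal{B}:=\mathcal{B}^-\cup\mathcal{B}_n\cup\Delta$, which is a generating set of $\CC(G)$ by \cref{merging}, and obtain $F$ from $F^-\cup F_n$ by greedily removing cycle edges, always selecting one of maximum ply in $\mathcal{B}$.

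To verify the claim, I treat $i=n$ and $i<n$ separately. For $i=n$, every edge of $\ssop{F}{A_n}$ is automatically $n$-old, and $\ssop{F}{A_n}$ is a forest with at most $2k-2$ vertices by \cref{edge_counter}, hence at most $2k-3$ edges; since each edge of $H_n$ is used at most $b$ times by $\mathcal{B}_n$ and at most $c\log^2 k$ times by $\Delta$, we have $M_n\le b+c\log^2 k$, which yields exactly the required bound. For $i<n$, the key step is an injective map $\varphi$ from the $i$-old edges of $\ssop{F}{A_i}$ to those of $\ssop{F^-}{A_i}$: any $i$-old $e\in\ssop{F}{A_i}$ satisfies $P_F(e)\subseteq G_i\subseteq G_{n-1}$, and since every edge of $F_n$ has an endpoint in $B_n\sm A_{n-1}$, which by the path-decomposition property is disjoint from $V(G_{n-1})$, we conclude $P_F(e)\subseteq F^-$; thus $e$ corresponds canonically to the edge of $\ssop{F^-}{A_i}$ with the same endpoints and path. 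When $M_i=M_i^-$, this injection together with the inductive bound yields the target inequality. When $M_i>M_i^-$, I need $\varphi$ to miss at least one preimage, and this is argued exactly as in the final paragraph of the proof of \cref{main}: any edge $\hat{e}\in E(G_i)\sm E(G_{i-1})$ witnessing $M_i>M_i^-$ must lie in $F^-$ (since $\Delta\subseteq F^-\cup F_n$ and $E(F_n)\cap E(G_{n-1})=\emptyset$) and is a cycle edge of $F^-\cup F_n$; the greedy procedure then removes a cycle edge $\hat{e}_1$ of ply at least that of $\hat{e}$, whose deletion destroys the $F^-$-path corresponding to some $i$-old edge of $\ssop{F^-}{A_i}$.

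The main obstacle will be this last step: verifying that the greedy-removed edge $\hat{e}_1$ really witnesses a preimage deficit for the particular index $i$ under consideration, rather than only for some other index. Concretely, one must show that $\hat{e}_1$ lies on the $F^-$-path between two vertices of $A_i$ that is contained in $G_i$. This amounts to tracking, at a single removal step, how the $\Delta$-cycle containing $\hat{e}_1$ interacts simultaneously with all the quantities $(M_i,\ \#\ i\text{-old edges in }\ssop{F}{A_i})$ for $i\in\{0,\ldots,n-1\}$, a bookkeeping that is technically more delicate than in the proof of \cref{main}, where only the single adhesion $B_n$ is tracked.
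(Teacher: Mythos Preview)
Your overall architecture matches the paper's proof, but the one place where you diverge—removing cycle edges of \emph{maximum ply} in $\mathcal{B}$—is precisely where the argument breaks, and the paper makes the opposite choice: it removes a cycle edge $\hat e_1$ of \emph{minimum birth time} (where the birth time of an edge is the least $j$ with that edge in $E(G_j)$). You correctly flag this step as the main obstacle in your final paragraph, and with the max-ply rule it cannot be overcome.

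Concretely, fix $i<n$ with $M_i>M_i^-$. As you observe, there is then an edge $\hat e$ of birth time $i$ lying in some cycle of $F^+$. If $\hat e_1$ is chosen to maximise ply, nothing forces $\hat e_1$ to have birth time at most $i$: a cycle edge of strictly later birth time $j>i$ may well have larger ply (edges born at time $j$ contribute to $M_j$, not $M_i$, and there is no a~priori ordering among the $M_j$). In that case $\hat e_1\notin E(G_i)$, so $\hat e_1$ cannot lie on any $i$-old path $P_{F^-}(e_1^-)\subseteq G_i$, and $\hat e_1$ witnesses no deficit in $\varphi$ for the index $i$. Since the claim must hold simultaneously for every $i\in\{0,\ldots,n\}$, the first removed edge must serve \emph{every} index $i$ at which $M_i>M_i^-$; choosing $\hat e_1$ of minimum birth time achieves this automatically (if some cycle edge of $F^+$ has birth time at most $i$, then so does $\hat e_1$), whereas choosing by ply does not. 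The paper itself remarks in its concluding section that this choice of removal rule is ``the main difference'' between the proofs of \cref{main} and \cref{adhesions}.
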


  The remainder of this proof is dedicated to proving \cref{i_old}, from which \cref{adhesions} immediately follows.  The proof is by induction on $n$.  
  Let 
  \[ \mathdefin{z_i}:=\frac{\max\{\ply(\hat{e},\mathcal{B}):\hat{e}\in E(G_i)\sm E(G_{i-1})\} - b}{c\log^2 k} \enspace ,
  \]
  so that the second part of \cref{i_old} is equivalent to stating that at most
  \[
    2k-2-z_i
  \]
  edges of $\ssop{F}{A_i}$ are $i$-old, for each $i\in\{0,\ldots,n\}$.  The guiding intuition behind the rest of this proof is that increasing $z_i$ by one results in the elimination of at least one $i$-old edge.

  The base case $n=0$ is straightforward:  By assumption, $G_0=G[B_0]$ has basis number $\bn(G_0)\le b$.  Let $\mathcal{B}$ be any basis of $G_0$ with ply at most $b$ and let $F$ be any spanning forest of $G_0$.  Then, for each edge $\hat{e}$ of $G_0$, $\ply_F(\hat{e},\mathcal{B})\le b$, so $z_0=0$.  By \cref{edge_counter}, $\ssop{F}{A_0}$ has at most $2k-2$ vertices and at most $2k-3$ edges (and therefore at most $2k-3< 2k-2-z_0$ edges that are $0$-old), as required by \cref{i_old}.

  We now assume that $n\ge 1$.
  Apply \cref{i_old} inductively with $A_{n-1}:=B_{n-1}\cap B_n$ to the graph $G_{n-1}$ with the path decomposition $B_0,\ldots,B_{n-1}$ to obtain a cycle basis $\mathcal{B}^-$ and a spanning forest $F^-$ of $G_{n-1}$.  Let $\Lambda$ be a cycle basis for $G[B_n]-E(G[A_{n-1}])$ with $\ply(\Lambda)\le b$, and let $F_n$ be any spanning forest of $G[B_n]$.  By \cref{two_trees}, $F^+:=F^-\cup F_n$ has a cycle basis  $\Delta$ with $\ply(\Delta)\le c \log^2 k$.  We define $\mathcal{B}:=\mathcal{B}^-\cup \Lambda \cup \Delta$.  By \cref{merging}, $\mathcal{B}$ is a generating set for $\CC(G_n)=\CC(G)$.

  All that remains is to define a forest $F$ that satisfies the conditions of \cref{i_old}.  To obtain $F$, we find a sequence of edges $\hat{e}_1,\ldots,\hat{e}_r$ such that $\hat{e}_i$ is a cycle edge in $F^+-\{\hat{e}_1,\ldots,\hat{e}_{i-1}\}$ whose birth time is minimum. This process ends when $F^+-\{\hat{e}_1,\ldots,\hat{e}_r\}$ is a forest, at which point we take $F:=F^+-\{\hat{e}_1,\ldots,\hat{e}_r\}$.  All that remains is to show that $\mathcal{B}$ and $F$ satisfy the requirements of \cref{i_old}.

  We first argue that $\ply(\mathcal{B})\le b+ (2k-2)c\log^2 k$, assuming that the forest $F$ satisfies the conditions of \cref{i_old}. For each edge $\hat{e}$ of $G$, define the \defin{birth time} of $\hat{e}$ as $\min\{i:\hat{e}\in E(G_i)\}$.  Note that, for each $i\in\{0,\ldots,n\}$, the edges of birth time $i$ are precisely the edges in $E(G_i)\sm E(G_{i-1})$ that determine the value of $z_i$.  Let $\hat{e}$ be an edge of $G$ that maximizes $\ply(\hat{e},\mathcal{B})$ and let $i$ be the birth time of $\hat{e}$.  Suppose, by way of contradiction, that $\ply(\hat{e},\mathcal{B})> b+(2k-2)\cdot c\log^2 k$.  
  Since $\hat{e}$ has birth time $i$, $z_i> 2k-2$.  Then $\ssop{F}{A_i}$ has most $2k-2 - z_i < 0$ edges that are $i$-old.  This is clearly a  contradiction, since the number of $i$-old edges of $\ssop{F}{A_i}$ cannot be less than zero.

  We now show that $\mathcal{B}$ and $F$ satisfy the remaining condition of \cref{i_old}.
  Fix $i\in\{0,\ldots,n\}$ and let $\hat{e}$ be an edge of $G$ with birth time $i$ that maximizes $\ply(e,\mathcal{B})$, so $\hat{e}$ determines the value of $z_i$.  Let $m_i$ be the number of edges of $\ssop{F}{A_i}$ that are $i$-old.  We must show that $m_i\le 2k-2-z_i$.

  We first consider the case where $i=n$. In this case, $\hat{e}\not\in E(G_{n-1})$, so $\ply(\hat{e},\mathcal{B}) = \ply(\hat{e},\Lambda)+\ply(\hat{e},\Delta) \le b + c\log^2 k$. In this case, $z_i\le 1$. By \cref{edge_counter}, $\ssop{F}{A_n}$ has at most $2k-3\le 2k-2-z_i$ edges, so $m\le 2k-2-z_i$, as required.

  Next we consider the case where $i\in\{0,\ldots,n-1\}$. In this case, $\ssop{F^-}{A_i}$ contains an $i$-old edge $e$ such that $P_{F^-}(e)$ contains the edge $\hat{e}$.  Since $i < n$, $\hat{e}\in E(G_{n-1})$. Therefore, no cycle in $\Lambda$ contains $\hat{e}$, so $\ply(\hat{e},\mathcal{B})=\ply(\hat{e},\mathcal{B}^-)+\ply(\hat{e},\Delta)\le \ply(\hat{e},\mathcal{B}^-)+c\log^2 k$.  Let $S_i$ be the set of $i$-old edges of $\ssop{F}{A_i}$ and let $S_i^-$ be the set of $i$-old edges of $\ssop{F^-}{A_i}$.  Let $m_i=|S_i|$ and $m_i^-:=|S_i^-|$.
  For each edge $f\in S_i$, there is at least one edge $f^-\in S_i^-$ such that  $P_{F^-}(f^-)\subseteq P_F(f)$. Thus, there is an injective mapping $\varphi:S\to S^-$.  This immediately implies that $m_i\le m_i^-$.  Let $z_i^-$ be such that $\ply(\hat{e},\mathcal{B}^-)= b+z_i^-\cdot c\log^2 k$.  The inductive hypothesis implies that $m_i^- \le 2k-2-z_i^-$.  Since $\mathcal{B}\supseteq\mathcal{B}^-$, $\ply(\hat{e},\mathcal{B})\ge \ply(\hat{e},\mathcal{B}^-)$ and $z_i\ge z_i^-$.  If $z_i=z_i^-$ then $m_i^-\le 2k-2-z_i^-=2k-2-z_i$, so $m_i\le m_i^-\le 2k-2-z_i$, as required.

  To finish, we consider the case where $z_i > z_i^-$ and show that this implies that $S^-$ contains an edge $e^-_1$ with no preimage in $\varphi$.  
  Since $z_i>z_i^-$, $\ply(\hat{e},\mathcal{B})>\ply(\hat{e},\mathcal{B}^-)$. Since $\ply(\hat{e},\mathcal{B})=\ply(\hat{e},\mathcal{B}^-)+\ply(\hat{e},\Delta)$, some cycle in $\Delta$ contains $\hat{e}$. Therefore, $F^+$ contains a cycle that contains $\hat{e}$.  Therefore $F^+$ contains a cycle that contains an edge whose birth time is at most $i$.  Since $\hat{e}_1$ has minimum birth time over all edges of $F^+$ that participate in cycles, $\hat{e}_1$ has birth time at most $i$.  Then $\ssop{F^-}{A_i}$ contains an edge $e_1^-$ such that $\hat{e}_1$ is contained in $P_{F^-}(e_1^-)$.  Therefore $e_1^-$ is an $i$-old edge of $\ssop{F^-}{A_i}$, so $e_1^-\in S^-$.
  Since $\hat{e}_1$ is not in $F$, there is no edge $e_1$ in $\ssop{F}{A_i}$ such that $P_F(e_1)$ contains $\hat{e}_1$. Therefore $e_1^-\in S^-$ has no preimage in $\varphi$.  Therefore $m_i=|S|<|S^-| = m^-_i$.  Since $m_i$ and $m^-_i$ are integers, $m_i \le m^-_i -1\le 2k-2-z_i^--1\le 2k-2-z_i$. This completes the proof.
\end{proof}

\section{Conclusions}

The proofs of \cref{main} and \cref{adhesions} are quite similar, the main difference being how the edges $\hat{e}_1,\ldots,\hat{e}_r$ are chosen.
Indeed, \cref{main} can also be established using the second argument: a normal path decomposition $(B_0,\ldots,B_{n})$ of a pathwidth-$t$ graph has adhesions of size at most $t$; the cycle basis $\Lambda$ can be empty, since $G_n-E(G_{n-1})$ is a star, and the cycle basis $\Delta$ can be chosen so that $\ply(\Delta)\le 2$ (rather than $c\log^2 k$), since $F^-\cup G_i-E(G_{i-1})$ is planar.

Our reasons for presenting both proofs is that both ideas were tried (unsuccessfully, so far) in trying to prove the following result:

\begin{conj}
  Every graph $G$ of treewidth at most $k$ has $\bn(G)\in O(k)$.
\end{conj}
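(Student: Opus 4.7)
The plan is to attack the conjecture by induction on a nice tree decomposition of $G$ directly, rather than by first reducing to pathwidth (which is where the current polynomial blowup originates). Fix a nice tree decomposition $(B_x:x\in V(T))$ of $G$ of width at most $k$, rooted at an arbitrary node, so that every internal node is an introduce, forget, or join node. For each node $x$, let $G_x$ denote the subgraph induced by the vertices appearing in the subtree rooted at $x$. We would process $T$ bottom-up, maintaining at each node $x$ a spanning forest $F_x$ of $G_x$ and a generating set $\mathcal{B}_x$ of $\CC(G_x)$, together with a technical invariant bounding the number of edges of $\ssop{F_x}{B_x}$ at each ply level, generalising \cref{technical}. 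Since $|B_x|\le k+1$, \cref{edge_counter} guarantees that $\ssop{F_x}{B_x}$ has at most $2k-1$ edges, so there are only $O(k)$ ply levels to bookkeep, which is what makes $O(k)$ the natural target.

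For leaf, introduce, and forget nodes the argument essentially mimics the proof of \cref{main}. At an introduce node adding a vertex $v$ with neighbours in the parent bag, we apply \cref{incremental}: the graph $F_y\cup\{vw:w\in N_G(v)\}$ is a forest plus a star, hence planar, so we can choose $\Delta$ with $\ply(\Delta)\le 2$, and the ply grows by at most $2$. Forget nodes only shrink the boundary and simply require re-examining which edges of $\ssop{F_x}{B_x}$ survive. The bookkeeping that tracks how many high-ply edges of $\ssop{F_x}{B_x}$ can exist then mirrors the proof of \cref{main}, contributing only an $O(k)$ increment along the length of any root-to-leaf branch.

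The real obstacle is the join nodes. At a join node $x$ with children $y_1,y_2$ and $B_x=B_{y_1}=B_{y_2}$, the natural step is to invoke \cref{merging}, producing the generating set $\mathcal{B}_{y_1}\cup\mathcal{B}_{y_2}\cup\Delta$, where $\Delta$ is a cycle basis for $F_{y_1}\cup F_{y_2}$. \Cref{two_trees} only guarantees $\ply(\Delta)\in O(\log^2 k)$, and naively accumulating this cost at every join node along a root-to-leaf branch produces a bound much larger than $O(k)$. Unlike at an introduce node, there is no planarity witness available for $F_{y_1}\cup F_{y_2}$: two arbitrary spanning forests sharing $k+1$ boundary vertices can, after suppression, behave like an arbitrary graph on $O(k)$ vertices, which is precisely the regime in which $\Omega(\log^2 k)$ ply is possible.

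Any successful approach therefore seems to require one of the following ingredients. The cleanest would be to coordinate the choice of $F_{y_1}$ and $F_{y_2}$ across the whole decomposition so that $F_{y_1}\cup F_{y_2}$ is always planar (or at least has basis number $O(1)$), mirroring the situation at introduce nodes. Failing that, one could try to absorb the excess ply from $\Delta$ into $\mathcal{B}_{y_1}$ or $\mathcal{B}_{y_2}$ by rerouting boundary paths through already-used edges, or mount a global amortised potential-function argument bounding the total cost of all join nodes on any single path in $T$ by using the fact that each join must reduce some measure of boundary complexity. Identifying which of these routes (if any) is viable is, in our view, the heart of the conjecture and the reason previous attempts, including our own via the proofs of \cref{main} and \cref{adhesions}, have so far fallen short.
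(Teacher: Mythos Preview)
The statement you were given is a \emph{conjecture}, not a theorem: the paper does not prove it and explicitly presents it as open. In the Conclusions section the authors state that ``both ideas were tried (unsuccessfully, so far) in trying to prove the following result'' and then display exactly this conjecture. So there is no paper proof to compare your proposal against.

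Your writeup is not a proof either, and you acknowledge this yourself in the final paragraph. What you have written is a discussion of a natural line of attack (bottom-up processing of a nice tree decomposition, handling introduce and forget nodes via the mechanism of \cref{incremental} and \cref{technical}) together with an identification of the obstruction at join nodes. That diagnosis is consistent with the paper's own remarks: the authors say that the techniques behind both \cref{main} and \cref{adhesions} were tried on this conjecture without success, and the join-node issue you isolate is precisely where the path-decomposition arguments cease to apply, since a path decomposition has no join nodes. Your observation that $F_{y_1}\cup F_{y_2}$ need not be planar, and that \cref{two_trees} only yields $O(\log^2 k)$ ply for $\Delta$, correctly pinpoints why the existing machinery stalls.

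In short: there is no gap to flag because you have not claimed a proof, and there is no comparison to make because the paper offers none. Your discussion accurately reflects the state of the problem as the paper presents it.
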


A second reason for presenting both proofs is that the elegant proof of \cref{general_upper_bound} by \citet{freedman.hastings:building} is a probabilistic argument that repeatedly adds a (shortest) cycle $C$ to add to the generating set and then chooses a random edge of $C$ to remove from $G$.  In an experimental work, \citet{wang.irani:cycle} show that choosing an edge that (deterministically or probabilistically) maximizes ply (as in the proof of \cref{main}) produces better cycle bases in practice. The hope is that insights gained from the proof of \cref{main} could be helpful in establishing the following conjecture:

\begin{conj}{\rm \cite{biedl}}
  Every $n$-vertex graph $G$ has $\bn(G)\in O(\log n)$.
\end{conj}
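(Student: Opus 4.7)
The plan is to attempt to strengthen the bound of \cref{general_upper_bound} by replacing the probabilistic ingredient of the Freedman--Hastings argument with the deterministic maximum-ply edge elimination rule developed in the proofs of \cref{main} and \cref{adhesions}. The motivating idea, consistent with the experimental findings of \citet{wang.irani:cycle} mentioned in the conclusion, is that greedy removal of the cycle edge of maximum current ply should concentrate rather than spread high-ply contributions, and may thereby save one logarithmic factor over the Freedman--Hastings analysis.

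The key steps I would attempt are as follows. First, fix a balanced recursive partition of $V(G)$ into $O(\log n)$ levels, obtained by an arbitrary vertex bisection at each step; general graphs admit no small such separators, so the adhesions may be large, but the recursion itself is well-defined. Second, at each leaf of the recursion, compute a cycle basis of the induced bag using \cref{general_upper_bound} applied to the (now very small) bag as a black box. Third, merge sibling bags via \cref{merging}, choosing $\Delta$ to be a cycle basis of the union of the two spanning forests and then applying the maximum-ply elimination rule to the newly introduced cycles of $\Delta$ to produce the merged forest $F$. The hope is that a charging argument analogous to the one in \cref{i_old}---counting how many high-ply edges survive the elimination at each merge---shows that each level contributes only additive $O(1)$ to the ply of the heavily-loaded edges, yielding $O(\log n)$ total over $O(\log n)$ levels.

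The main obstacle is the absence of small balanced separators in general graphs. Unlike graphs of bounded pathwidth or $K_t$-minor-free graphs, an arbitrary $n$-vertex graph may force adhesions of size $\Omega(n)$ at every level of any recursive decomposition. The merging step then needs to add a cycle basis of the union of two spanning trees meeting in $\Omega(n)$ vertices, and \cref{two_trees} only guarantees this basis has ply $O(\log^2 n)$---not $O(1)$---so \cref{adhesions} as it stands buys nothing on a single level with $k = \Theta(n)$. Breaking past this barrier seems to require a genuinely new mechanism for constructing low-ply cycle bases for the union of two trees with large intersection, perhaps exploiting the shortness of the cycles that the maximum-ply rule actually encounters, or a spectral notion of expansion, rather than any purely structural decomposition of $G$.
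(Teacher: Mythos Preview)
The statement is a \emph{conjecture}, not a theorem; the paper does not prove it and presents it explicitly as an open problem motivating future work. There is therefore no proof in the paper to compare your proposal against.

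Your proposal is likewise not a proof but a research outline, and you have correctly identified its fatal gap yourself. The recursion into $O(\log n)$ levels via arbitrary vertex bisection produces adhesions of size $\Theta(n)$ at every level for a general graph (e.g.\ an expander). At each merge, \cref{two_trees} only guarantees a basis of ply $O(\log^2 k)$ for the union of two forests meeting in $k$ vertices, so with $k=\Theta(n)$ each level may contribute $\Theta(\log^2 n)$ rather than $O(1)$ to the ply. Summed over $O(\log n)$ levels this yields $O(\log^3 n)$, worse than the $O(\log^2 n)$ of \cref{general_upper_bound}, so the outline as written does not even recover the known bound. The deterministic maximum-ply elimination rule from the proofs of \cref{main} and \cref{adhesions} is the right heuristic to explore---and the paper's conclusion points in the same direction---but the charging argument in \cref{i_old} crucially uses that the adhesion size $k$ is fixed while the number of bags grows, so that the $2k-2$ budget is bounded independently of $n$. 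With $k=\Theta(n)$ that budget is unbounded and the argument collapses. Resolving the conjecture appears to require a genuinely new idea, as you note in your final paragraph.
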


\bibliographystyle{plainurlnat}
\bibliography{bnpw}

\end{document}